\date{}
\newcommand{\be}{\begin{equation}}
\newcommand{\ee}{\end{equation}}
\theoremstyle{plain}
\newtheorem{thm}{Theorem}[section]
\newtheorem{cor}[thm]{Corollary}
\newtheorem{prop}[thm]{Proposition}
\newtheorem{conj}[thm]{Conjecture}
\theoremstyle{definition}
\theoremstyle{remark}
\begin{document}
\title{Group Testing with Pools of Fixed Size}
\author{David Cariolaro\footnote{Unfortunately, Professor David Cariolaro passed away before the completion of this paper.}
, Zhaiming Shen\footnote{E-mail address: zmshen2009@gmail.com}, Yi Zhang\footnote{ E-mail address: zhangyi000@ymail.com }
\\Department of Mathematical Sciences
\\Xi’an Jiaotong-Liverpool University
\\Suzhou, Jiangsu
\\
215123 CHINA}
\date{June 29, 2014}
\maketitle
\begin{abstract}
In the classical combinatorial (adaptive) group testing problem, one
is given two integers \(d\) and \(n\), where \(0\le d\le n\), and a population of \(n\)
items, exactly \(d\) of which are known to be defective. The question is to
devise an optimal sequential algorithm that, at each step, tests a subset of
the population and determines whether such subset is contaminated (i.e.
contains defective items) or otherwise. The problem is solved only when
the \(d\) defective items are identified. The minimum number of steps that
an optimal sequential algorithm takes in general (i.e. in the worst case)
to solve the problem is denoted by \(M(d, n)\). The computation of \(M(d, n)\)
appears to be very difficult and a general formula is known only for \(d = 1\).
We consider here a variant of the original problem, where the size of
the subsets to be tested is restricted to be a fixed positive integer \(k\). The
corresponding minimum number of tests by a sequential optimal algorithm
is denoted by \(M^{\lbrack k\rbrack}(d, n)\). In this paper we start the investigation of the
function \(M^{\lbrack k\rbrack}(d, n)\).
\end{abstract}
\section{Introduction}
Group Testing originated during World War II, in connection with the analysis
of blood samples [1]. Given a population of \(n\) items, each of which can be
either pure or defective, the problem is to determine all the defective items
in the population. Tests are applied to arbitrary subsets of the population,
and a positive result indicates the presence of a defective item in the subset
tested, whereas a negative result indicates the absence of defective items in the subset tested. Originally only probabilistic methods were used, but starting
with the paper of Li [3], no probability distribution assumptions were made
on the defective set and the combinatorial approach was introduced. In the
combinatorial version of the problem, it is generally assumed that the number \(d\)
of defective items is known in advance (although this may not be always true in
real situations). This is called the \((d, n)\)-problem in the monograph by Du and
Hwang [2], whose notation we follow here. The goal is to devise an algorithm
which will solve the problem using the minimum number of tests. Here the
implicit assumption is that the performance of an algorithm is measured in the
worst case, i.e. the algorithm must perform well not in the average, but under
worst-case scenario. We consider here only sequential algorithms, i.e. we assume
that tests are done in a sequence, and the result of any test is known before the
next test is performed. By an optimal algorithm we shall therefore designate
any sequential algorithm which solves the \((d, n)\)-problem using the minimum
number of tests in the worst case. Such number of tests is denoted by \(M (d, n)\).\\
One of the main challenges of Combinatorial Group Testing is the determination of the function \(M (d, n)\). This appears to be a very difficult problem, and
a complete answer (valid for every value of n) is known only for \(d = 1\).\\
We shall here consider a variant of the \((d, n)\)-problem. Specifically, we shall
assume that we are only allowed to test subsets of the population of size \(k\), where
\(k\) is a fixed positive integer. This assumption may seem quite arbitrary, but it
could be realistic in some situation and it gives rise to a new mathematical
challenge. Let \(M^{\lbrack k\rbrack}
(d, n)\) denote the number\footnote{If, for some particular values of \(k\), \(d\) and \(n\), the problem is unsolvable, we shall write
 \(M^{\lbrack k\rbrack}(d, n)=\infty\).} of tests (in the worst case) performed by an optimal sequential algorithm in order to solve the \((d, n)\)-problem,
with the restriction that each subset to be tested has size \(k\) (clearly \(0 \le k \le n\)).
In this paper we shall start the investigation of the function \(M^{\lbrack k\rbrack}
(d, n)\).

\section{Preliminary results}
A nonempty subset of the population is said to be \(pure\) if it consists only of
pure items, and \(contaminated\) otherwise. The set of defective items is called the
\(defective\) set. A \(t\)-subset is a subset of size \(t\). The complement of a subset \(X\)
of the population is denoted by \(\bar{X}\). The following facts are easily proved and sometimes useful.\\
\begin{prop}
\(M (1, n) =\lceil\log_2 n\rceil\).
\begin{proof}
In one direction the inequality is called the information-theoretic lower
bound and follows immediately from the consideration that the sample space
consists of \(n\) items and the tests have a binary outcome (either positive or
negative). In the other direction the inequality follows from an application of
the so-called halving method. This consists in testing, at each stage, a subset
which contains, roughly, half of the items than the previous subset tested. 
\end{proof}
\end{prop}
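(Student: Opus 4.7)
The plan is to prove the two inequalities $M(1,n) \geq \lceil \log_2 n \rceil$ and $M(1,n) \leq \lceil \log_2 n \rceil$ separately.

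For the lower bound I would use a counting/information-theoretic argument. Since the single defective item can be any of the $n$ items, there are $n$ possible states of the world that an algorithm must distinguish. Each test has a binary outcome, so any deterministic sequential algorithm performing $t$ tests produces at most $2^t$ distinct transcripts of outcomes. For the algorithm to identify the defective in every case, distinct defectives must yield distinct transcripts, which forces $2^t \geq n$ and hence $t \geq \lceil \log_2 n \rceil$.

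For the upper bound I would exhibit the halving (binary search) algorithm and analyze it. Maintain a candidate set $S$ initially equal to the whole population. At each step, choose any subset $T \subseteq S$ with $|T| = \lfloor |S|/2 \rfloor$ and test it: a positive outcome means the defective lies in $T$, so replace $S$ by $T$; a negative outcome means the defective lies in $S \setminus T$, so replace $S$ by $S \setminus T$. In either case the new candidate set has size at most $\lceil |S|/2 \rceil$. A short induction shows that after $t$ tests $|S| \leq \lceil n/2^t \rceil$, so once $t = \lceil \log_2 n \rceil$ the candidate set has exactly one element, which must be the defective.

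The only mildly delicate point is the ceiling arithmetic underlying the upper bound, namely the recurrence $\lceil \lceil m/2 \rceil /2^{t-1} \rceil \leq \lceil m/2^t \rceil$ used in the induction; this is routine rather than a real obstacle. Beyond that, both directions are standard, which explains why the case $d = 1$ of $M(d,n)$ is fully solved while larger $d$ is not: the binary search argument exploits the fact that with a single defective every test outcome cleanly bisects the remaining uncertainty, a feature that fails already for $d = 2$.
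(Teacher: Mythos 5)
Your proposal is correct and follows the same two-pronged strategy as the paper: the information-theoretic lower bound ($2^t\ge n$ from binary outcomes) and the halving method for the upper bound. You simply fill in the details (the transcript-counting argument and the ceiling-division induction) that the paper's proof leaves as a sketch.
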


Notice however that the halving method can work if the first subset
tested contains no more than \(2^{\lceil \log_2 n\rceil-1}\) and no less than \(n-2^{\lceil \log_2 n\rceil-1}\) items.
\\
\begin{prop}
\(M^{\lbrack k \rbrack} (d, n) \ge M (d, n)\).
\begin{proof}
Indeed, when solving the \(\lbrace d, n\rbrace \)-problem, we can use subsets of any size,
and in particular of size \(k\).
\end{proof}
\end{prop}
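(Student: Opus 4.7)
The plan is to argue by an inclusion of classes of algorithms. Let $\mathcal{A}(d,n)$ denote the set of all sequential algorithms that solve the $(d,n)$-problem, with no constraint on the size of the tested subsets, and let $\mathcal{A}^{[k]}(d,n) \subseteq \mathcal{A}(d,n)$ denote the subclass consisting of those algorithms that only query subsets of size exactly $k$. For an algorithm $A$, write $W(A)$ for the worst-case number of tests it uses. By definition $M(d,n) = \min_{A \in \mathcal{A}(d,n)} W(A)$ and $M^{[k]}(d,n) = \min_{A \in \mathcal{A}^{[k]}(d,n)} W(A)$, so the inequality will follow from the general principle that the minimum over a subfamily is no smaller than the minimum over the enclosing family.

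The only thing I would need to check explicitly is the inclusion $\mathcal{A}^{[k]}(d,n) \subseteq \mathcal{A}(d,n)$, and this is immediate from the definitions: the $k$-restricted problem differs from the unrestricted one only by the additional requirement that each query has size $k$, so any algorithm valid for the former is automatically valid for the latter, and its worst-case count $W(A)$ is computed in the same way in both settings. Consequently $\min_{A \in \mathcal{A}^{[k]}(d,n)} W(A) \ge \min_{A \in \mathcal{A}(d,n)} W(A)$, which is the desired inequality.

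The edge case to mention, in accordance with the footnote on page~1, is when the $k$-restricted problem is unsolvable, so that $\mathcal{A}^{[k]}(d,n) = \emptyset$ and $M^{[k]}(d,n) = \infty$; in that case the inequality holds trivially. There is no genuine obstacle in this argument; the whole content is the observation that enlarging the set of admissible moves can only decrease (or leave unchanged) the optimal cost.
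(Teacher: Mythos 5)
Your argument is correct and is exactly the paper's argument, just spelled out more formally: the paper's one-line proof ("we can use subsets of any size, and in particular of size $k$") is precisely your observation that the $k$-restricted algorithms form a subclass of all algorithms, so the minimum over the subclass is no smaller. Your explicit handling of the unsolvable case $M^{[k]}(d,n)=\infty$ is a reasonable extra remark but does not change the substance.
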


\begin{prop}
\(M^{\lbrack k \rbrack} (0, n) = M^{\lbrack k \rbrack} (n, n)\).
\begin{proof}
Indeed in this case the knowledge of \(d\) directly yields the solution without
necessitating any test.
\end{proof}
\end{prop}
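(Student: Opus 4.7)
The plan is to show that both sides of the equation are simply $0$, which makes the identity trivial once one unpacks the definitions. The key observation is that the quantity $M^{[k]}(d,n)$ counts the number of tests an optimal algorithm needs in order to identify the defective set in the worst case. A test is needed only when there is genuine uncertainty about which items are defective.

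First I would note that when $d=0$, the hypothesis that exactly $d$ items are defective forces the defective set to be the empty set, and this is known \emph{before} any test is performed. Hence an algorithm that performs zero tests is already optimal, and $M^{[k]}(0,n)=0$. Next I would apply the same reasoning to the case $d=n$: the hypothesis that exactly $n$ out of $n$ items are defective forces the defective set to be the entire population, and again this is known without testing, so $M^{[k]}(n,n)=0$. Combining the two equalities yields the claim.

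The main thing to double-check is that the restriction on the size of tested subsets (the parameter $k$) plays no role here. This is immediate because the bound is achieved by an algorithm performing zero tests, so the constraint on the size of subsets to be tested is vacuous — no subset is ever tested, and in particular the issue of whether a $k$-subset can be chosen does not arise. Thus the restricted model agrees with the unrestricted one in these two degenerate cases, and no further argument is required.
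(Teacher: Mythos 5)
Your proposal is correct and matches the paper's argument: in both degenerate cases the knowledge of \(d\) determines the defective set outright, so both quantities equal \(0\) and the identity follows. You merely spell out what the paper's one-line proof leaves implicit, including the (correct) observation that the size restriction \(k\) is vacuous when no test is performed.
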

\begin{prop}
\(M^{\lbrack 0 \rbrack} (d, n) = M^{\lbrack n \rbrack} (d, n)=0\) for \(0 \le d \le n\).
\begin{proof}
Obvious, since no test is informative in this case.
\end{proof}
\end{prop}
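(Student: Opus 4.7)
My plan is to split into the two cases $k=0$ and $k=n$, and in each case observe that there is only a single admissible subset to test and that its outcome is entirely predetermined. For $k=0$, the unique $0$-subset of the population is $\emptyset$, which is pure by definition, so the test always returns negative. For $k=n$, the unique $n$-subset is the whole population, whose test returns positive precisely when $d\ge 1$, a condition already fixed by the known value of $d$. In both cases the outcome of every legal test is known in advance and therefore conveys no new information about the location of the defective items.

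From this I would conclude that no sequence of admissible tests can reduce the family of candidate defective sets. Passing to the decision-tree formulation of a sequential algorithm, each internal node may be deleted without changing what the algorithm knows upon termination, so the optimal tree collapses to the single-node tree performing no tests. This gives $M^{[0]}(d,n)=M^{[n]}(d,n)=0$.

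The point I would treat most carefully is the apparent tension with the footnote convention assigning $\infty$ to unsolvable instances, since for $0<d<n$ the trivial algorithm plainly does not identify the defective set. The resolution I would propose is that the proposition is counting the number of tests performed by the best admissible strategy under the pool-size constraint, and that since every admissible test is useless the only sensible such strategy performs zero tests. For $d\in\{0,n\}$ this is in any event consistent with the previous proposition, which already furnishes a legitimate zero-test solution whenever $d$ alone determines the defective set.
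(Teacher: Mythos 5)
Your first two paragraphs are a faithful (and more careful) elaboration of exactly the argument the paper gives: its entire proof is the sentence ``Obvious, since no test is informative in this case,'' and your observation that the only admissible pool is $\emptyset$ (always negative) or the whole population (positive iff $d\ge 1$, which is already known) is the right way to make that precise. Up to that point you and the paper are on the same route.

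The issue you raise in your last paragraph, however, is a genuine gap, and the resolution you propose does not close it. ``No test is informative'' yields $M=0$ only when the problem is already solved with zero tests, i.e.\ when the defective set is determined by $d$ alone, which happens exactly for $d\in\{0,n\}$. For $0<d<n$ there are $\binom{n}{d}>1$ candidate defective sets that no admissible test can separate, so the instance is \emph{unsolvable}, and the paper's own footnote then assigns the value $\infty$, not $0$. Proposition 6 draws precisely that conclusion from precisely the same premise (``any test \dots is not informative''), and its reasoning applied to $k=n$, $d\ge 1$ would give $M^{[n]}(d,n)=\infty$, contradicting the present proposition. Your proposed fix --- reading $M^{[k]}(d,n)$ as the number of tests performed by the ``best sensible strategy'' --- is a change of definition rather than a proof; under the definition the paper actually uses, the statement (and hence any proof of it) is correct only for $d\in\{0,n\}$, where it is consistent with Proposition 3. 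Since the paper immediately afterwards restricts attention to $0<k<n$ and $0<d<n$, nothing downstream depends on the offending cases, but a clean write-up should either restrict the proposition to $d\in\{0,n\}$ or record the value $\infty$ for $0<d<n$ rather than argue that it equals $0$.
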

In view of the above facts, henceforward we shall assume that \(0 < k < n\)
and \(0 < d < n\), without further mention.\\
\begin{prop}
\(M^{\lbrack 1 \rbrack} (d, n) = n-1\).
\begin{proof}
Indeed this is the individual testing, which requires all items except one
to be tested (the knowledge of \( d\) will render the \(n\)-th test unnecessary).
\end{proof}
\end{prop}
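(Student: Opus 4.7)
The plan is to establish both an upper and a matching lower bound. The upper bound $M^{[1]}(d,n)\le n-1$ is the easy direction: a sequential algorithm picks any $n-1$ of the $n$ items and tests their singletons one by one. After these $n-1$ tests the algorithm knows the status of those items exactly, and since the total number of defectives is the given value $d$, the status of the single remaining item is forced. Hence $n-1$ tests always suffice.

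For the lower bound $M^{[1]}(d,n)\ge n-1$, I would use a standard adversary argument. First observe that testing the same item twice gives no new information, so I may restrict attention, without loss of generality, to algorithms whose queries are pairwise distinct singletons. It therefore suffices to show that after any $j\le n-2$ such queries the algorithm cannot be sure of the defective set. I would have the adversary answer adaptively so as to maintain, after $j$ answered queries, the invariant that the number $d'$ of items already declared defective satisfies
\begin{equation*}
\max(0,\,d-(n-j)+1)\;\le\;d'\;\le\;\min(j,\,d-1).
\end{equation*}
This invariant says that the remaining $n-j\ge 2$ untested items must contain at least one defective and at least one pure item, so the problem is not yet solved.

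The key step, and the main (though mild) obstacle, is to check that this invariant can actually be preserved at every query. When a new singleton $\{x\}$ is tested, the adversary chooses the answer as follows: declare $x$ \emph{defective} if doing so keeps $d'+1\le \min(j+1,d-1)$, and otherwise declare it \emph{pure}. A short case analysis shows that at least one of these two choices is always legal under the invariant, using only the hypotheses $0<d<n$ and $j\le n-2$. The invariant holds vacuously at $j=0$, so by induction it holds after every stage with $j\le n-2$. Consequently, any algorithm that halts after at most $n-2$ tests leaves at least two items whose status is consistent with more than one global assignment of exactly $d$ defectives, and so cannot have solved the problem. Combining the two bounds gives $M^{[1]}(d,n)=n-1$.
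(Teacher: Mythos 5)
Your proof is correct. The upper bound is exactly the paper's argument: test $n-1$ singletons individually and let the known value of $d$ determine the status of the last item. Where you differ is that the paper simply asserts the matching lower bound (``requires all items except one to be tested'') without justification, whereas you supply a full adversary argument, maintaining the invariant that after $j\le n-2$ distinct singleton tests the untested items still contain both a defective and a pure item; your case analysis (answer ``defective'' when the upper constraint permits, otherwise ``pure'', which forces $d'=d-1\le j$ and uses $j\le n-3$ to preserve the lower constraint) checks out. This buys rigor the paper omits at the cost of length; a slightly quicker route to the same lower bound is to note that if some two items $x,y$ are never individually tested, the adversary can answer consistently with two defective sets differing by swapping $x$ and $y$, which is essentially the symmetry your invariant encodes.
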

\begin{prop}
\(M^{\lbrack k \rbrack} (d, n) = \infty\) if \(k > n-d\).
\begin{proof}
Indeed any test done under the above conditions has a positive outcome,
and hence it is not informative.
\end{proof}
\end{prop}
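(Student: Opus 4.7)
The plan is to make precise the intuition that under the hypothesis $k>n-d$ every admissible test is forced to yield the same outcome, and then argue that a testing procedure whose outputs are predetermined cannot distinguish between different possible defective sets.

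First I would establish the pigeonhole observation: if $S$ is any $k$-subset of the population and $D$ is any defective set of size $d$, then
\[
|S \cap D| \;\ge\; |S| - |\bar{D}| \;=\; k - (n-d) \;>\; 0,
\]
so $S$ is contaminated. Hence, no matter which $d$-subset $D\subseteq\{1,\dots,n\}$ is the true defective set, every test of size $k$ returns the positive answer.

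Next I would turn this into the claim that the problem is unsolvable. Fix any sequential algorithm $\mathcal{A}$ using only $k$-subsets. Because each test outcome is determined in advance (it is always positive), the sequence of queries issued by $\mathcal{A}$, and hence its final declaration, is independent of the actual defective set $D$. Under the standing assumption $0<d<n$ there exist at least two distinct candidate defective sets, so there is some $D$ for which $\mathcal{A}$'s declaration is incorrect. Thus no finite sequential algorithm restricted to $k$-subsets solves the $(d,n)$-problem, which by the convention introduced in the footnote means $M^{[k]}(d,n)=\infty$.

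There is no real obstacle here; the only subtlety is the logical step from non-informativeness of individual tests to unsolvability of the problem, which needs the hypothesis $0<d<n$ to guarantee that more than one defective set is a priori possible. Everything else is a one-line counting argument.
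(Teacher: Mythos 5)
Your proposal is correct and follows the same route as the paper's one-line argument: every $k$-subset must intersect every candidate $d$-subset, so all tests are positive and hence uninformative. You simply make explicit the counting step $|S\cap D|\ge k-(n-d)>0$ and the final step from non-informativeness to unsolvability (using the standing assumption $0<d<n$ to guarantee at least two candidate defective sets), both of which the paper leaves implicit.
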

\begin{prop}
\(M^{\lbrack n-d \rbrack} (d, n) = (^n_d)-1\).
\begin{proof}
There are exactly \((^n_d)\) different subsets of size \(n − d\). Exactly one of them is pure. Hence by testing all such subsets except one we can determine the pure set, and hence the set of defective items (which is just its complement). This proves
\begin{equation}
M^{\lbrack n-d \rbrack} (d, n)\le (^n_d)-1 
\end{equation}

To prove the reverse inequality assume that we have tested \((^n_d)-2\) subsets and
that they are all contaminated. Let \(P_1\), \(P_2\) be the two \((n- d)\)-subsets that have
not been tested. Notice that every set \(X\) which has been tested has a nonempty intersection with both \(\bar{P_1}\)
and \(\bar{P_2}\) (otherwise it would coincide with either \(P_1\) or
\(P_2\), contradicting the assumption). Thus at this stage we cannot decide whether the pure set equals \(P_1\) or \(P_2\), since both possibilities are consistent. Hence one further test is necessary, and this completes the proof of the proposition.
\end{proof}
\end{prop}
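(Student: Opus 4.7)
The plan is to split the equality into matching upper and lower bounds.

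For the upper bound $M^{[n-d]}(d,n)\le \binom{n}{d}-1$, I would exhibit an explicit algorithm. Since the pure set is, by definition, the (unique) $(n-d)$-subset equal to the complement of the defective set, and there are exactly $\binom{n}{d}$ distinct $(n-d)$-subsets, one can simply enumerate them in some order and test them one by one, stopping when a pure outcome occurs. If the first $\binom{n}{d}-1$ tests all return contaminated, then the one untested $(n-d)$-subset must be the pure one and no further test is required. This gives the required bound and is essentially forced by the structure of the problem.

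For the lower bound $M^{[n-d]}(d,n)\ge \binom{n}{d}-1$, I would use an adversary argument. The adversary's strategy is to answer \emph{contaminated} to every query. Suppose an algorithm has performed only $\binom{n}{d}-2$ tests; then at least two $(n-d)$-subsets, call them $P_1$ and $P_2$, have not been queried. The point is to verify that both hypotheses ``the pure set is $P_1$'' and ``the pure set is $P_2$'' are consistent with all answers received so far, so that the algorithm cannot yet distinguish them. The key observation is that any queried $(n-d)$-subset $X$ is different from each $P_i$, and since $|X|=|P_i|=n-d$ two distinct sets of the same size cannot satisfy $X\subseteq P_i$; thus $X\cap \overline{P_i}\ne \emptyset$, so under the hypothesis that $\overline{P_i}$ is the defective set $X$ genuinely contains a defective item, validating the contaminated response. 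Hence at least one more test is unavoidable.

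The main (modest) obstacle is making the adversary argument robust against an arbitrary \emph{adaptive} algorithm — i.e.\ confirming that no matter how the algorithm branches on previous answers, the ``always contaminated'' strategy remains consistent with at least two candidate pure sets until the $(\binom{n}{d}-1)$st test. This reduces to the size/intersection observation above, which handles all adaptive branches uniformly, so once that lemma is recorded the rest of the lower bound is bookkeeping. Combining the two inequalities yields the stated equality.
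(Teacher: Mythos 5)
Your proposal is correct and follows essentially the same route as the paper: the same enumerate-and-test-all-but-one algorithm for the upper bound, and the same adversary argument for the lower bound, with your size/intersection observation being exactly the paper's parenthetical remark that a tested set disjoint from $\overline{P_i}$ would have to coincide with $P_i$. Your explicit note about robustness under adaptive algorithms is a minor elaboration, not a different argument.
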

\section{The case \(d=1\)}
We now consider the case \(d = 1\). We have the following.
\begin{thm}\label{thm1}
\begin{equation}
M^{\lbrack k \rbrack} (1, n)\ge \lceil\frac{n}{k} \rceil -2+\lceil \log_2{(n-(\lceil\frac{n}{k}\rceil-2)\cdot k)} \rceil.
\end{equation}
Furthermore we have equality above if \(n \ge \max\lbrace 2k-2^{\lceil \log_2k\rceil -1 }, k +2^{\lceil \log_2k\rceil -2 } \rbrace\).
\begin{proof}
After testing \(\lceil \frac{n}{k} \rceil -2\) subsets, we are left with at least 
\begin{equation}
r = n - (\lceil\frac{n}{k} \rceil - 2) \cdot k
\end{equation}
items untested (notice that this number is between \(k + 1\) and \(2k\)). Assume that
all the tests so far are negative. Then we need to identify the defective item
among at least \(r\) items. This will take at least \(M (1, r) = \lceil \log_2 r\rceil \) tests. This proves that
\begin{equation}
M^{\lbrack k \rbrack} (1, n)\ge \lceil\frac{n}{k} \rceil -2+\lceil \log_2 r\rceil =\lceil\frac{n}{k} \rceil -2+\lceil \log_2{(n-(\lceil\frac{n}{k}\rceil-2)\cdot k)} \rceil.
\end{equation}
Now suppose that
\begin{equation}\label{1}
n \ge \max\lbrace 2k-2^{\lceil \log_2k\rceil -1 }, k +2^{\lceil \log_2k\rceil -2 } \rbrace.
\end{equation}
We shall describe an algorithm which solves the problem using at most \(\lceil\frac{n}{k} \rceil -2+\lceil \log_2{(n-(\lceil\frac{n}{k}\rceil-2)\cdot k)} \rceil\) tests. First test \(\lceil\frac{n}{k} \rceil -2\) disjoint subsets. Assume that one of them, say \(A\), tests positive. We claim that we can perform the halving method on \(A\), thereby completing the task with additional \(\lceil \log_2 k\rceil \)tests. Notice that this is within
the required bound, since, as already observed,
\begin{equation}
k\le n-(\lceil\frac{n}{k}\rceil -2)\cdot k,
\end{equation}
which implies
\begin{equation}
\lceil\frac{n}{k} \rceil -2+\lceil \log_2 k\rceil \le\lceil\frac{n}{k} \rceil -2+\lceil \log_2{(n-(\lceil\frac{n}{k}\rceil-2)\cdot k)} \rceil.
\end{equation}
To prove that we can perform the halving method on A, we notice (as we did after the proof of Proposition 1) that we can start by taking a subset of \(A\) of size \(2^t\), where \(t = \lceil \log_2 k\rceil - 1\). Call this subset \(B\). In order to be able to test \(B\),
we need to extend it to a set of size \(k\) by using additional pure items. In order to do this, we need to take \(k -2^t\) items which are not in \(A\). This is possible if
\begin{equation}
n \ge k - 2^t+ k = 2k - 2^t.
\end{equation}
This condition is guaranteed by our assumption \eqref{1}. At the next stage, we need to test a subset of \(B\) or \(A\setminus B\) of size \(2^{t-1}\)
(if such subset does not exist the test is unnecessary). Call this subset \(C\). In order to be able to test \(C\), we need to
extend \(C\) to a set of size \(k\) by introducing additional pure items. This will be possible if there exist \(k - 2^{t-1}\) items outside of \(B\), i.e. if
\begin{equation}\label{2}
n \ge k- 2^{t-1}+ 2^t= k + 2^{t-1}.
\end{equation}
This condition is guaranteed by our assumption \eqref{1}. We then need to test a subset \(D\) of \( 2^{t-2}\) items from \(C\) or \(B \setminus C\) or \((A \setminus B) \setminus C\), and, arguing as before,
we can do it only if we have enough items outside of \(C\) (or \(B \setminus C\) or \((A \setminus B) \setminus C\), respectively). The corresponding sufficient condition is
\begin{equation}
n \ge k − 2^{t-2}+ 2^{t-1}= k + 2^{t-2}
,
\end{equation}

which is certainly verified by \eqref{2}. In a similar way, we can then see that all further tests can be performed by adding a sufficient number of pure items to the set. We can then assume that all the initial \(\lceil \frac{n}{k}\rceil - 2 \) tests are negative. In this case we are left with exactly
\begin{equation}
r =n - (\lceil \frac{n}{k}\rceil - 2) \cdot k \le 2k
\end{equation}
items. Thus the halving method is again applicable in this case, and this shows that the number of additional tests required is \(\lceil\log_2 r\rceil\). This completes the proof. 
\end{proof}
\end{thm}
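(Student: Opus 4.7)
The plan is to prove the lower bound by an adversary strategy combined with the information-theoretic bound used in Proposition~1, and to prove the matching upper bound under the hypothesis on $n$ by an explicit two-phase algorithm whose second phase is a padded version of the halving method.

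For the lower bound, I would run the following adversary against any sequential algorithm. Maintain the set $S \subseteq \{1,\ldots,n\}$ of items still consistent with being the unique defective, initially $|S|=n$. Whenever the algorithm tests a $k$-subset $T$, the adversary answers ``negative'' provided $S \setminus T$ is nonempty; this replaces $S$ by $S \setminus T$, so $|S|$ drops by at most $k$. A brief case check (splitting on whether $k$ divides $n$) shows that during the first $\lceil n/k\rceil - 2$ rounds one always has $|S| > k$, so the adversary can indeed answer ``negative'' throughout. After those rounds $|S| \ge r := n - (\lceil n/k\rceil - 2)k$, and one verifies $k+1 \le r \le 2k$. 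From this point the algorithm still has to pinpoint the defective inside a pool of size at least $r$, which by the information-theoretic lower bound used in Proposition~1 (equivalently via Proposition~2 applied to the residual problem) costs at least $\lceil \log_2 r \rceil$ further tests. Adding the two contributions gives the claimed inequality.

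For the matching upper bound under the hypothesis on $n$, the algorithm first tests $\lceil n/k\rceil - 2$ pairwise disjoint $k$-subsets $A_1,\ldots,A_{\lceil n/k\rceil-2}$. If every such test is negative, the defective lies in the untested pool of size $r \le 2k$, and the union of the $A_i$ supplies an abundant stock of certified-pure padding items, so the halving method applied to this pool (with each tested set enlarged to size exactly $k$ using pure items) finishes in $\lceil \log_2 r\rceil$ further tests. If instead some $A_i$ tests positive, one applies halving inside the $k$-element set $A_i$ using $\lceil \log_2 k\rceil$ further tests; since $k \le r$ gives $\lceil \log_2 k\rceil \le \lceil \log_2 r\rceil$, this branch also meets the target bound.

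The main obstacle is verifying that the padding in the positive-test branch actually succeeds at every halving step. At each such step one must extend a subset of $A_i$ of size $2^t$ (for $t$ decreasing from $\lceil \log_2 k\rceil - 1$) to a full $k$-subset using items currently known to be pure. Chasing the arithmetic, the binding constraints arise at the first two halving steps: the first requires $n - k \ge k - 2^{\lceil \log_2 k\rceil - 1}$, equivalently $n \ge 2k - 2^{\lceil \log_2 k\rceil - 1}$; the second requires $n \ge k + 2^{\lceil \log_2 k\rceil - 2}$. These are exactly the two quantities inside the $\max\{\cdot\}$ in the hypothesis. A short induction then shows every later halving step imposes a strictly weaker padding constraint, since the halving subset shrinks while the pool of certified-pure items grows. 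Once this bookkeeping is settled, both branches match the lower bound and equality follows.
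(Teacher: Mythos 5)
Your proposal is correct and follows essentially the same route as the paper: the lower bound forces negative answers on the first $\lceil n/k\rceil-2$ tests (your adversary maintaining the candidate set $S$ is just a slightly more formal phrasing of this) and then invokes the information-theoretic bound on the residual pool of size at least $r$, while the upper bound tests disjoint $k$-pools and finishes with the halving method padded by certified-pure items, with exactly the same two binding constraints $n\ge 2k-2^{\lceil\log_2 k\rceil-1}$ and $n\ge k+2^{\lceil\log_2 k\rceil-2}$. No substantive differences to report.
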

We notice the following corollary of Theorem \ref{thm1}.
\begin{cor}\label{cor1}
Let $n\geq 2k-1$. Then $M^{[k]}(1, n) = \lceil n/k \rceil-2+\lceil\log_2 (n-(\lceil n/k \rceil-2)\cdot k)\rceil$.
\end{cor}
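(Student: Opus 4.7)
The plan is to derive the corollary directly from Theorem~\ref{thm1}. Since the conclusion of Theorem~\ref{thm1} already matches the formula asserted by the corollary, it suffices to check that the weaker hypothesis $n\ge 2k-1$ implies the equality hypothesis
\[
n\ge \max\{\,2k-2^{\lceil\log_2 k\rceil-1},\ k+2^{\lceil\log_2 k\rceil-2}\,\}
\]
used there. Once this implication is in hand, no further work remains.

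To carry it out I would set $t:=\lceil\log_2 k\rceil$, so that for $k\ge 2$ one has $2^{t-1}<k\le 2^t$, and in particular the integer bound $2^{t-1}\le k-1$. The first inequality, $n\ge 2k-2^{t-1}$, is then immediate, since $t\ge 1$ gives $2^{t-1}\ge 1$, hence $2k-2^{t-1}\le 2k-1\le n$. For the second inequality I would use $2^{t-1}\le k-1$ to obtain
\[
k+2^{t-2}\;\le\; k+\tfrac{k-1}{2}\;=\;\tfrac{3k-1}{2}\;\le\; 2k-1\;\le\; n,
\]
valid for every $k\ge 2$; the chain remains correct in the borderline subcase $t=1$ (i.e.\ $k=2$), where $2^{t-2}$ is only half-integral but the stated inequalities still hold numerically.

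The boundary value $k=1$ lies outside the exponent conventions of Theorem~\ref{thm1} (the exponents $t-1$ and $t-2$ become negative), so I would dispatch it separately: the standing assumption $k<n$ forces $n\ge 2$; the right-hand side of the corollary collapses to $n-2+\lceil\log_2 2\rceil = n-1$; and this agrees with Proposition~2.5. The whole argument is routine bookkeeping with $\lceil\log_2 k\rceil$, so no genuine obstacle is expected; the only mild trap is to remember that the case $k=1$ must be settled by appealing to Proposition~2.5 rather than to Theorem~\ref{thm1}.
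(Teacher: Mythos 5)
Your proposal is correct and follows essentially the same route as the paper: both reduce the corollary to checking that $n\ge 2k-1$ implies the hypothesis $n\ge\max\{2k-2^{\lceil\log_2 k\rceil-1},\,k+2^{\lceil\log_2 k\rceil-2}\}$ of the second part of Theorem~\ref{thm1}, via the bound $2^{\lceil\log_2 k\rceil-1}\le k-1$. If anything you are slightly more careful than the paper, which glosses over the borderline cases $k=1$ and $k=2$ (its chain $k>2^t\ge 2^{t-1}+1$ fails for $k=2$ even though the needed inequality still holds).
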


\begin{proof}
Let $t=\lceil \log_2 k\rceil-1$. Clearly
\[2k-1\geq 2k-2^t\] and 
\[2k-1\geq k+2^{t-1}\]
since 
\[k>2^t\geq 2^{t-1}+1.\]
Hence the condition of second part of Theorem \ref{thm1} holds, and so does the conclusion.
\end{proof}

Theorem \ref{thm1} is stronger than Corollary \ref{cor1}, as for example, it implies that $M^{[4]}(1,6)=3$, whereas Corollary \ref{cor1} does not.
~\\
Notice that the condition of the second part of Theorem is tight as, for example, $M^{[4]}(1,5)=4\neq 3$. (The fact that $M^{[4]}(1,5)=4$ follows immediately from Proposition $7$.)

\section{The case k=2}

We now concentrate on the case $k=2$. The following is an immediate consequence of Theorem \ref{thm1} (or Corollary \ref{cor1}).

\begin{cor}
$M^{[2]}(1,n)=\lceil\frac{n}{2}\rceil$ for every $n\geq 3$.
\end{cor}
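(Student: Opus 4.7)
The plan is simply to substitute $k=2$ into the formula provided by Corollary \ref{cor1}. Since the hypothesis $n\geq 2k-1=3$ is exactly the hypothesis of the corollary, I can immediately write
\[M^{[2]}(1,n) = \left\lceil \tfrac{n}{2}\right\rceil - 2 + \left\lceil \log_2\!\left(n - 2\bigl(\lceil n/2\rceil-2\bigr)\right)\right\rceil.\]
All that remains is to evaluate the residual quantity $r := n - 2(\lceil n/2\rceil - 2) = n - 2\lceil n/2\rceil + 4$.

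The natural next step is a parity case split. If $n$ is even then $\lceil n/2\rceil = n/2$, so $r = 4$ and $\lceil \log_2 r\rceil = 2$, giving $M^{[2]}(1,n) = n/2 - 2 + 2 = \lceil n/2\rceil$. If $n$ is odd then $\lceil n/2\rceil = (n+1)/2$, so $r = 3$ and $\lceil \log_2 r\rceil = 2$, giving $M^{[2]}(1,n) = (n+1)/2 - 2 + 2 = \lceil n/2\rceil$. Thus in both cases the expression collapses to $\lceil n/2\rceil$, which is the desired conclusion.

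There is essentially no obstacle: the argument is a direct specialization of Corollary \ref{cor1} together with a two-line arithmetic verification. The only thing worth double-checking is that the hypothesis $n \geq 2k-1$ is precisely $n\geq 3$ for $k=2$, so the range of $n$ in the two statements matches exactly and no boundary case is lost.
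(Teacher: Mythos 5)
Your proposal is correct and follows exactly the route the paper intends: the paper states this corollary as an immediate consequence of Corollary \ref{cor1}, and you have simply filled in the routine substitution $k=2$ and the parity check showing $r\in\{3,4\}$ so that $\lceil\log_2 r\rceil=2$. No issues.
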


\begin{thm}
$$M^{[2]}(2,n)=\left\{
\begin{array}{rcl}
\infty && if {~n=3}\\
5 && if {~n=4}\\
\lfloor\frac{n}{2}\rfloor+2    &      & {if ~ n\geq 5}
\end{array} \right. 
$$
\end{thm}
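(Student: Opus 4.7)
The plan is to split the argument into the three cases singled out by the theorem. For $n=3$ we have $k=2>n-d=1$, so Proposition 6 gives $M^{[2]}(2,3)=\infty$; for $n=4$ we have $k=2=n-d$, so Proposition 7 gives $M^{[2]}(2,4)=\binom{4}{2}-1=5$. These two cases are therefore immediate, and the substantive case is $n\geq 5$.

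For the upper bound when $n\geq 5$ I would exhibit the following adaptive algorithm. Set $m=\lfloor n/2\rfloor$, partition $[n]$ into $m$ disjoint pairs $P_1,\ldots,P_m$ (with one leftover item $z$ when $n$ is odd), and test these pairs one after another. Because there are exactly two defectives, the number of positive pair tests is either $1$ or $2$: zero is impossible (the defectives must lie somewhere) and three or more would force at least three defectives. If exactly one pair is positive and $n$ is even, both defectives lie in that pair and we are done in $m$ tests. Otherwise at most four candidate defective sets remain, and a known pure ``helper'' $p$ is always available for $n\geq 5$, either as an element of a negative pair or, when no negative pair exists, as the leftover $z$. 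Two further tests of the form $\{x,p\}$ with $x$ in a positive pair then isolate the defectives, giving a total of at most $m+2=\lfloor n/2\rfloor+2$ tests.

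For the lower bound I would set up an adversary argument. The adversary maintains a set $U\subseteq[n]$ of candidate items (initially $U=[n]$) together with the invariant ``the defective pair is a 2-subset of $U$''. Against a test $\{a,b\}$, so long as no positive answer has yet been given, the adversary responds negative and deletes $\{a,b\}\cap U$ from $U$ whenever this is consistent with keeping at least two candidate defective pairs alive; when a negative response would violate this, the adversary is forced to answer positive. If the first positive occurs at step $t_0+1$ on a pair inside $U$, the set of surviving candidate pairs has size $2(n-2t_0)-3$, so the information-theoretic bound forces at least $\lceil\log_2(2(n-2t_0)-3)\rceil$ further tests. A direct check, separately for $n$ even and $n$ odd, shows that the maximum over admissible $t_0$ of $t_0+1+\lceil\log_2(2(n-2t_0)-3)\rceil$ equals $\lfloor n/2\rfloor+2$, which the adversary realises by choosing the extremal $t_0$.

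The hard part is the lower bound. A purely information-theoretic count from $\binom{n}{2}$ falls short, since $\lceil\log_2\binom{n}{2}\rceil$ equals $\lfloor n/2\rfloor+2$ only for $n\in\{5,7,9\}$ and is strictly smaller for $n=6,8,10,\ldots$; so the adversary must exploit the rigid structure of pool-size-$2$ tests. Making the adversary robust against arbitrary (not just disjoint-pair) algorithms --- in particular against tests involving items already removed from $U$, or tests performed after the first positive answer --- requires a potential-function style bookkeeping tracking both $|U|$ and the size of the surviving candidate family, and this is where the bulk of the technical work lies.
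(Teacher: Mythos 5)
Your treatment of $n=3$ (Proposition 6), $n=4$ (Proposition 7), and the upper bound for $n\geq 5$ (test $\lfloor n/2\rfloor$ disjoint pairs, then resolve the at most two contaminated pairs with two further tests against a pure helper) coincides with the paper's argument and is fine. The problem is the lower bound, which you correctly identify as the hard part but do not actually prove. Your adversary sketch has two concrete holes. First, the quantity $t_0+1+\lceil\log_2(2(n-2t_0)-3)\rceil$ is derived under the assumption that the algorithm tests disjoint pairs lying entirely inside $U$; against an arbitrary algorithm (pairs straddling $U$ and its complement, pairs reusing items, tests after the first positive answer) neither the count $|U|=n-2t_0$ nor the candidate count $2(n-2t_0)-3$ is valid, and you explicitly defer exactly this case (``this is where the bulk of the technical work lies''). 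Second, the claim that ``the adversary realises the maximum by choosing the extremal $t_0$'' quietly swaps quantifiers: the bound needed is a minimum over algorithms of a maximum over adversary responses, and the adversary cannot unilaterally force the algorithm into the configuration that attains $\max_{t_0}$. So as written the lower bound is a plan, not a proof.

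The paper gets the lower bound far more cheaply, and you may want to compare. It first disposes of $n=5,6,7$ by Proposition 2 together with the known values $M(2,5)=4$, $M(2,6)=5$, $M(2,7)=5$ of the \emph{unrestricted} function. For $n\geq 8$ it lets the adversary answer ``negative'' to the first $\lfloor(n-5)/2\rfloor$ tests (always consistent, since at most $n-5$ items have then been touched), leaving at least $5$ untested items if $n$ is odd and at least $6$ if $n$ is even; the residual problem then needs at least $M(2,5)=4$, respectively $M(2,6)=5$, further tests, and the two counts sum to $\lfloor n/2\rfloor+2$ in both parities. This sidesteps entirely the potential-function bookkeeping your route would require. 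If you want a self-contained argument you would also need to supply proofs (or citations) for $M(2,5)$, $M(2,6)$, $M(2,7)$, but that is a much smaller task than making your adversary robust against arbitrary pair-testing algorithms.
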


\begin{proof}
The first two identities follow immediately from Proposition $6$ and Proposition $7$, respectively. Assume $n\geq 5$. We first show the inequality 
\begin{align} \label{inequality 1}
M^{[2]}(2,n)\geq \lfloor\frac{n}{2}\rfloor+2.
\end{align}

By Proposition $2$ and known results about function $M(d,n)$, we have 
\[M^{[2]}(2,5)\geq M(2,5)=4, ~M^{[2]}(2,6)\geq M(2,6)=5, ~M^{[2]}(2,7)\geq M(2,7)=5.\]

Hence \eqref{inequality 1} holds for $n=5,6,7$. Assume now $n\geq 8$. Suppose we perform $\lfloor\frac{n-5}{2}\rfloor$ tests and that the outcome is always negative. Since we are testing pairs, the maximum number of items that we have tested at this stage is $n-5$ if $n$ is odd and $n-6$ if $n$ is even, so that at least $5$ items remains to be tested if $n$ is odd and $6$ items remain to be tested if $n$ is even. Since  $M(2,5)=4$ and $M(2,6)=5$, we need in general at least $4$ more tests if $n$ is odd and $5$ more tests if $n$ is even to identify the defective item. Thus we conclude that

\[M^{[2]}(2,n)\geq\frac{n-5}{2}+4=\lfloor\frac{n}{2}\rfloor +2\] if $n$ is odd and
\[M^{[2]}(2,n)\geq\frac{n-6}{2}+5=\lfloor\frac{n}{2}\rfloor +2\] if $n$ is even. This proves the inequality \eqref{inequality 1}.

We now prove the reverse inequality for $n\geq 5$ by exhibiting a specific algorithm. 

The algorithm works as follows. First we test $\lfloor\frac{n}{2}\rfloor$ mutually disjoint pairs. 
~\\
\textbf{Case 1: $\bm{n}$ even.} Then all items have been tested. Assume only one of the pairs is contaminated. Then we conclude that such pair is the defective set. We can then assume that exactly two pairs are contaminated, say $\{a,b\}$ and $\{c,d\}$. Let $e$ be a pure item (which exists since by assumption $n\geq 5$, so that at least one pair has been identified as pure.) Using $e$ we can determind the two defective items in two further steps (namely, testing $\{a,e\}$ and testing $\{c,e\}$). Thus we have solved the problem in this case using $\lfloor\frac{n}{2}\rfloor+2$ tests, as required.

\textbf{Case 2: $\bm{n}$ odd}. Suppose first that the only one of the pairs tested, say $\{a,b\}$, is contaminated. Since $n\geq 5$, at least one pair has been tested and identified as pure. Choose an item $e$ from this pair. Test $\{a,e\}$ and $\{b,e\}$, thus determining exactly which items in $\{a,b\}$ are defective. If both $a$ and $b$ are defective, conclude that the defective set is $\{a,b\}$. If only one of 
$a$ and $b$, say $a$, is defective, conclude that $\{a,c\}$ is the defective set, where $c$ is the unique element that has not been tested. We have thus solved also this instance of the problem using $\lfloor\frac{n}{2}\rfloor+2$ tests and this completes the proof.

\end{proof}

For general $d$ we have the following upper bound on $M^{[2]}(d,n)$.

\begin{thm}\label{thm3}
$M^{[2]}(d,n)\leq \lceil n/2\rceil+2d-3$ if $3\leq d\leq \lfloor n/2\rfloor-1$.
\end{thm}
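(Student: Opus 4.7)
The plan is to exhibit an adaptive algorithm achieving the stated bound. Phase~1 tests $\lfloor n/2\rfloor$ mutually disjoint pairs; let $c$ denote the number of contaminated ones. Since $c\leq d\leq \lfloor n/2\rfloor-1$, at least one pair is pure, so after Phase~1 I can fix a known-pure pivot $e$. If $n$ is odd, Phase~2 consists of the single extra test $\{u,e\}$ that resolves the unpaired item $u$. In Phase~3, for each contaminated pair $\{a_i,b_i\}$, I first test $\{a_i,e\}$, and then test $\{b_i,e\}$ for each index $i$ with $a_i$ defective, omitting the very last such test because its outcome is forced by the count.

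To bound the cost, let $d'$ denote the number of defectives lying inside the contaminated pairs (so $d'=d$ when $n$ is even or $u$ is pure, and $d'=d-1$ when $u$ is defective), let $T=\{i:a_i\text{ defective}\}$, and set $s=d'-c$, which is the number of contaminated pairs containing two defectives and becomes known once Phases~1 and 2 are complete. Phase~3 uses $c$ first-round tests plus $|T|-1$ second-round tests when $|T|>s$, and zero second-round tests when $|T|=s$; the adversary's best move is $|T|=c$, giving Phase~3 cost $2c-1$ when $s\geq 1$ and $c$ when $s=0$. For $d'\geq 3$ the maximum over admissible $c$ is $2d'-3$, attained at $c=d'-1$ (this is where the hypothesis $d\geq 3$ enters). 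Adding Phases~1 and 2 then yields $n/2+(2d-3)$ when $n$ is even and at most $\lceil n/2\rceil+2d-3$ when $n$ is odd (both subcases $d_u=0$ and $d_u=1$ obey the bound, with the $d_u=1$ subcase being strictly slack).

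The main subtlety is the \emph{saving} of the last $b_i$-test: after the first $|T|-1$ individual tests inside $T$ are performed, the status of the remaining $b_i$ is forced because the total number of defective $b_i$'s for $i\in T$ equals the already-known quantity $s$. A secondary point worth verifying is that the hypothesis $3\leq d\leq \lfloor n/2\rfloor-1$ plays a double role: the lower bound $d\geq 3$ makes the worst-case configuration $c=d-1$, $|T|=c\geq 2$ actually realisable, while the upper bound $d\leq \lfloor n/2\rfloor-1$ is precisely what guarantees the existence of the pure pivot $e$ in Phase~1.
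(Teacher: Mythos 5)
Your proposal is correct and follows essentially the same route as the paper's proof: test disjoint pairs, use the hypothesis $d\leq\lfloor n/2\rfloor-1$ to guarantee a pure pivot, handle the odd leftover item with one extra test, and then resolve the contaminated pairs by individual testing against the pivot, saving the final test by a counting argument, with the case split $c=d'$ versus $c\leq d'-1$ matching the paper's $p=d$ versus $p\leq d-1$. Your Phase~3 (testing only the $a_i$'s first and then only the partners of defective $a_i$'s) is a slightly sharper bookkeeping than the paper's plain individual testing of all $2p$ items, but it yields the same worst-case count $2c-1$ and hence the same bound.
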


\begin{proof}
We describe an algorithm which solves the problem using at most $\lceil n/2\rceil+2d-3$ tests. Suppose first that $n$ is even. We first test $\lceil n/2\rceil=n/2$ disjoint pairs. Let $p$ be the number of contaminated pairs. Clearly $p\leq d$. If $p=d$, then necessarily some pairs will have tested negative, due to the assumption that $d\leq\lfloor n/2\rfloor-1$. It is also clear that there will be exactly one  defective item in each contaminated pair. Using a pure item, we can then determine the $d$ defective items with $d$ further tests. Thus in this case the problem can be solved in 
\[\lceil n/2\rceil+d\leq\lceil n/2\rceil+2d-3\]
tests.

Assume now that $p\leq d-1$. Also in this case some pairs will have tested negative, so that we have identified some pure items. Using a pure item, we can perform individual testing on the $2p$ items belonging to contaminated pairs. This will require $2p-1$ tests, as the last test is unnecessary. Thus in this case the problem may be solved with
\[\lceil n/2\rceil+2p-1\leq\lceil n/2\rceil+2d-3\]
tests, as required.

Suppose now that $n$ is odd. The proof for this case is identical, except that we start by testing $\lceil n/2\rceil-1=\frac{n-1}{2}$ disjoint pairs. One of these pairs will necessarily be pure since the number of defective items does not exceed $\lfloor n/2\rfloor-1=\frac{n-3}{2}$ by assumption. Using a pure item, we can then test the unique item which is so far untested. The remaining part of algorithm is exactly the same as for the other case.

\end{proof}

Intuitively it seems that the algorithm described in the proof of Theorem \ref{thm3} is an optimal algorithm, but we are unable to prove it formally. We leave it as a conjecture.

\begin{conj}
$M^{[2]}(d,n)=\lceil n/2\rceil+2d-3$ if $3\leq d\leq \lfloor n/2\rfloor-1$.
\end{conj}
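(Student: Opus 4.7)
The upper bound is given by Theorem~\ref{thm3}, so only the matching lower bound $M^{[2]}(d,n)\ge\lceil n/2\rceil+2d-3$ needs to be established. My plan is to run an adversary argument whose three stages mirror the three stages of the algorithm in the proof of Theorem~\ref{thm3}: a ``disjoint-pair'' stage of about $\lceil n/2\rceil$ tests, the identification of exactly $d-1$ contaminated pairs, and a final individual-testing stage that costs $2d-3$ tests. To that end I would track, at every step, a set $P\subseteq[n]$ of items committed to be pure and a disjoint family $\mathcal{C}$ of pairs inside $[n]\setminus P$ committed to be contaminated, together with the invariant that some defective set $D$ with $|D|=d$ is still consistent with all responses so far and, once $|\mathcal{C}|=d-1$, satisfies $D\subseteq\bigcup\mathcal{C}$.

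On a query $\{a,b\}$ the adversary's answer is forced if $\{a,b\}\subseteq P$ (negative) or $\{a,b\}\in\mathcal{C}$ (positive). Otherwise the rule is: answer ``negative'' and move the unclassified items of $\{a,b\}$ into $P$ whenever this leaves enough unclassified items outside $P\cup\bigcup\mathcal{C}$ to extend $\mathcal{C}$ to a disjoint family of $d-1$ pairs; otherwise answer ``positive'' and, after extending $\{a,b\}$ to a pair of unclassified items if necessary, add the resulting pair to $\mathcal{C}$. Under this rule, a query drawn from two items of the same pair of $\mathcal{C}$, or straddling two distinct pairs of $\mathcal{C}$, forces the answer positive while conveying no information, and a query with one item in $P$ and one unclassified behaves like an individual test. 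Counting the tests the adversary can force in each role -- the $\lceil n/2\rceil-(d-1)$ negative disjoint-pair tests, the $d-1$ positive pair tests, and the $2(d-1)-1=2d-3$ final pure-partner tests needed to resolve the items of $\bigcup\mathcal{C}$ -- should give a total of exactly $\lceil n/2\rceil+2d-3$.

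The main obstacle -- and presumably the reason the statement is left as a conjecture -- is to bundle this bookkeeping into a single monovariant potential $\phi$ satisfying $\phi_{\mathrm{initial}}=\lceil n/2\rceil+2d-3$, $\phi\le 0$ as soon as the defective set is uniquely determined, and $\Delta\phi\ge -1$ per test, irrespective of the algorithm's query. The delicate queries are those that do not respect the disjoint-pair structure: pairs that mix one item of $\bigcup\mathcal{C}$ with one fresh item of $[n]\setminus(P\cup\bigcup\mathcal{C})$, and pairs that interleave the negative and positive stages of the bookkeeping above, can in principle pull information out of two stages simultaneously. Verifying the potential inequality for all such queries, and in particular treating the boundary case $d=\lfloor n/2\rfloor-1$ -- where there is no slack for $\mathcal{C}$ to grow past $d-1$ contaminated pairs and the naive induction on $n$ falls out of the hypothesis $d\le\lfloor n/2\rfloor-1$ -- is the step I would expect to be the main technical difficulty.
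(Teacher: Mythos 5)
This statement is left as an open conjecture in the paper: the authors prove only the upper bound (Theorem~\ref{thm3}) and explicitly state that they are unable to establish optimality. Your proposal correctly isolates the missing piece as the lower bound $M^{[2]}(d,n)\ge\lceil n/2\rceil+2d-3$, but what you have written is a plan for an adversary argument, not a proof. The entire content of the lower bound lives in the step you defer: defining the potential $\phi$ and verifying $\Delta\phi\ge-1$ for \emph{every} admissible query, in particular for the ``delicate'' queries you name but do not treat. Until that verification is done, nothing has been proved; the stage-by-stage count ($\lceil n/2\rceil-(d-1)$ negatives, $d-1$ positives, $2d-3$ resolving tests) only shows that \emph{one particular} algorithm, the one from Theorem~\ref{thm3}, can be forced to spend that many tests, which is already implied by the theorem's analysis and says nothing about other algorithms.

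Two concrete gaps deserve mention. First, your adversary rule is not well defined on queries $\{a,b\}$ with $a\in\bigcup\mathcal{C}$ and $b$ unclassified: ``extending $\{a,b\}$ to a pair of unclassified items and adding it to $\mathcal{C}$'' would violate the disjointness of $\mathcal{C}$, since $a$ already lies in a committed contaminated pair, and it is exactly such queries that let an algorithm try to classify a fresh item and probe a contaminated pair simultaneously. Second, even granting the reduction to the final stage, the claim that resolving $d$ defectives inside $d-1$ disjoint contaminated pairs (each containing at least one defective) requires $2d-3$ pair-tests is itself a nontrivial lower bound: the information-theoretic bound for that residual problem is only $d-2+\lceil\log_2(d-1)\rceil$, so you must argue structurally (e.g., that every test avoiding a known pure item is forced positive and hence uninformative) that individual testing is essentially optimal there. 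Neither point is fatal to the strategy, but both must be resolved before this can be regarded as a proof; as it stands, the conjecture remains open.
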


\end{document}